\DeclareMathOperator*{\Ker}{Ker}
\DeclareMathOperator*{\R}{Re}
\newcommand{\dd}{\mathrm{d}}
\newcommand{\ep}{\varepsilon}
\newcommand{\B}{\mathcal{B}}
\newcommand{\RR}{\mathbb{R}}
\newcommand{\CC}{\mathbb{C}}
\newcommand{\ZZ}{\mathbb{Z}}
\newcommand{\NN}{\mathbb{N}}
\newcommand{\inv}{^{-1}}
\newcommand{\T}{(T(t))_{t\ge0}}
\newtheorem{thm}{Theorem}[section]
\newtheorem{prp}[thm]{Proposition}
\newtheorem{cor}[thm]{Corollary}
\theoremstyle{definition}
\newtheorem{rems}[thm]{Remarks}
\numberwithin{equation}{section} 
\begin{document}

\title[Polynomial stability of non-linear semigroups]{Polynomial stability of non-linearly damped contraction semigroups}

\author[L.~Paunonen]{Lassi Paunonen}
\address[L.~Paunonen]{Mathematics Research Centre, Tampere University, P.O.~Box 692, 33101 Tampere, Finland}
 \email{lassi.paunonen@tuni.fi}
 
\author[D. Seifert]{David Seifert}
\address[D. Seifert]{School of Mathematics, Statistics and Physics, Newcastle University, Herschel Building, Newcastle upon Tyne, NE1 7RU, United Kingdom}
\email{david.seifert@ncl.ac.uk}

\begin{abstract}
We investigate the stability properties of an abstract class of semi-linear systems. Our main result establishes rational rates of decay for classical solutions assuming a certain non-uniform observability estimate for the linear part and suitable conditions on the non-linearity. We illustrate the strength of our abstract results by applying them to a one-dimensional wave equation with weak non-linear damping and to an Euler--Bernoulli beam with a tip mass subject to non-linear damping.
\end{abstract}

\subjclass{%
34G20, 
47H20, 
93D15, 
93D20, 
93B07 
(47D06, 
35B35, 
35L20)
}

\keywords{
Polynomial stability, semi-linear systems, $C_0$-semigroup, observability, resolvent estimate,  wave equation, SCOLE model.}

\thanks{This work was supported by the Research Council of Finland Grant number 349002 and by the Heilbronn Institute for Mathematical Research.}

\maketitle

\section{Introduction}\label{sec:int}

Motivated by applications to hyperbolic partial differential equations with non-linear damping, we consider abstractsemi-linear systems of the form
\begin{equation}\label{eq:sys}
\dot x(t)=Ax(t) - B\phi(B^\ast x(t)), \qquad t\ge0,
\end{equation}
to be solved subject to the initial condition $x(0)=x_0$.
Here $A$ is assumed to be the generator of a contraction semigroup $\T$ on a Hilbert space $X$, $x_0\in X$ and $B$ is a bounded linear operator from  another Hilbert space $U$ into $X$. Furthermore, $\phi\colon U\to U$ is a locally Lipschitz continuous map which satisfies $\phi(0)=0$ and is \emph{monotone} in the sense that
$$\R\langle\phi(u_1)-\phi(u_2),u_1-u_2\rangle\ge0,\qquad u_1,u_2\in U.$$  These conditions on $\phi$ allow us to interpret the second summand on the right-hand side of~\eqref{eq:sys} as a non-linear \emph{damping} term.

We investigate the stability properties of the equation~\eqref{eq:sys}. Our main result in Theorem~\ref{thm:main} establishes a rational decay rate for classical solutions of~\eqref{eq:sys} based on a generalised observability-type condition on the operators $A$ and $B$ and the additional assumption that 
\begin{equation}
\label{eq:PhiAssStab}
\R\langle\phi(u),u\rangle \gtrsim
\begin{cases}
 \|u\|^2, &   \|u\|\le\delta,\\
1,  &\|u\|>\delta,
\end{cases}
\end{equation}
for some $\delta>0$. We illustrate our theoretical results in Section~\ref{sec:PDEs} by applying them to a one-dimensional wave equation with weak non-linear damping, and to an Euler--Bernoulli beam  with a tip mass subject to non-linear damping.

The study of polynomial rates of decay for non-linearly damped evolution equations goes back to the work of Haraux and others in the 1970s and 1980s. These early works consider wave equations and abstract second-order systems subject to non-linear viscous damping,  establishing rates of energy decay typically by means of Lyapunov analysis and multiplier methods~\cite{Har87, HarZua88, KakNis79, Nak78, Nak83, Zua88}; we refer the reader to~\cite[Chap.~5 \& 6]{Zua24book} for an overview of several of these developments. More recent works on rational and generalised decay rates for semigroups and (abstract) wave equations with non-linear viscous-type damping include~\cite{Ala10,AlaPri17,ChiKaf25,Kom94a, Nak10}, and the case of non-linear boundary damping has been studied in~\cite{Ala05,LasTat93,VanMar00,VanFer23,XuXu19}. 
Exponential stability in the presence of non-linear damping has been investigated for instance
 in~\cite{ChiMar21,GuiLog19,MarChi20,Teb20}.

One typical feature of these models is that in the special case of linear damping one obtains uniform exponential stability of the underlying semigroup; it is the non-linearity of the damping that gives rise to polynomial (or slower) rates of decay. Our assumptions are fundamentally different. For one thing, in our case the linear system generally fails to be uniformly exponentially stable, and indeed we are interested in cases where the non-linearity \emph{preserves} the polynomial rate of decay one obtains for (classical) solutions of the corresponding linear system. 
 Furthermore, the conditions we impose on the non-linear function $\phi$ are stated in terms of the inner product on the Hilbert space $U$, whereas most previous works in the setting of wave-type equations consider viscous damping acting pointwise in space. As a result, we follow a rather different approach  based on a non-uniform observability estimate.  In doing so we extend to the non-linear setting various results in~\cite{AmmTuc01}; the same results have  previously been extended to abstract wave-type equations with non-linear viscous damping in~\cite{AlaAmm11,AmmBch16,AmmBch17,Phu11}. 
Our results  may be viewed as  providing rates of decay in earlier unquantified studies on asymptotic stability with non-linear damping
such as~\cite{CurOos01, CurZwa16, LasSei03,MarAnd17,Sle89}. 

A function $x(\cdot)\in C^1(\RR_+;X)$ is said to be a  \emph{classical solution} of~\eqref{eq:sys} if  $x(t)\in D(A)$ for all $t\ge 0$ and~\eqref{eq:sys} holds for all $t\geq 0$. 
A function $x(\cdot)\in C(\RR_+;X)$ is said to be a \emph{mild solution} of~\eqref{eq:sys} if 
$$x(t)=T(t)x_0-\int_0^tT(t-s)B\phi(B^*x(s))\,\dd s,\qquad t\ge0.$$
Note that if $\phi$ is a linear map then the notions of classical and mild solutions are consistent with the terminology used for linear abstract Cauchy problems. We recall, moreover, that in the linear case the mild solution is given by the semigroup orbit, and that it is a classical solution precisely when $x_0\in D(A)$. 

If $x$ is a  classical solution of~\eqref{eq:sys}, then dissipativity of $A$ gives
$$
\frac{\dd}{\dd t}\|x(t)\|^2
 = 2\R \langle A x(t) - B\phi(B^\ast x(t)),x(t)\rangle\leq -2\R \langle \phi(B^\ast x(t)),B^\ast x(t)\rangle.
$$
It follows that
\begin{equation}\label{eq:energy_balance}
\|x(t)\|^2 + 2\int_0^t \R \langle \phi(B^\ast x(s)),B^\ast x(s)\rangle\,\dd s 
\le \|x(0)\|^2, \qquad t\ge 0,
\end{equation}
and in particular monotonicity of $\phi$ implies that any classical solution has non-increasing norm. It follows from~\cite[Thm.~11.1.5(b)]{CurZwa20book} that \eqref{eq:sys} has a unique classical solution whenever $x_0\in D(A)$, and by density of $D(A)$ in $X$ there exists a unique mild solution for all $x_0\in X$. In fact, by monotonicity of $\phi$ any two solutions $x_1(\cdot),x_2(\cdot)$ satisfy
\begin{equation}\label{eq:monotone}
\|x_2(t)-x_1(t)\|\le \|x_2(0)-x_1(0)\|, \qquad t\geq 0,
\end{equation}
and for any classical solution $x(\cdot)$ the function $\|\dot x(\cdot)\|$ is non-increasing by~\cite[Cor.~3.7]{Miy92book}.

Given (complex) Hilbert spaces $X$ and $Y$, we write $\B(X,Y)$ for the space of bounded linear operators from $X$ to $Y$, and we write $\B(X)$ for $\B(X,X)$. 
We denote the domain, kernel, spectrum and resolvent set of a linear operator $A$
by  $D(A)$, $\Ker A$, $\sigma(A)$, and $\rho(A)$,  respectively.
If $p$ and $q$ are two real-valued quantities we write $p\lesssim q$ to express that $p\le Cq$ for some constant $C>0$ which is independent of all parameters that are free to vary in a given situation. We shall also make use of standard `big-O' and `little-o'  notation.

\section{Polynomial stability of abstract semi-linear systems}\label{sec:decay}

We now come to our main result. It provides polynomial decay rates for classical solutions of~\eqref{eq:sys} under suitable assumptions on the linear counterpart with $\phi$ equal to the identity function. We write $A_B$ for the corresponding infinitesimal generator $A-BB^*$ with domain $D(A_B)=D(A)$.
The second part of the result shows that if $\phi$ is sufficiently close to linear near the origin, then the decay rate of classical solutions is determined by the rate of resolvent growth in the corresponding linear equation. In this case, even a suboptimal observability inequality~\eqref{eq:ObsEstimate} will, when combined with a sharp resolvent estimate, lead to the optimal decay rate.

\begin{thm}\label{thm:main}
Suppose that $\phi\colon U\to U$ is monotone, locally Lipschitz continuous and satisfies $\phi(0)=0$, and that~\eqref{eq:PhiAssStab}
holds for some $\delta>0$.
Suppose further that there exist $\beta,\tau,c_\tau>0$ such that
\begin{equation}
\label{eq:ObsEstimate}
c_\tau\|(I-A)^{-\beta} x_0\|^2 
\le \int_0^\tau \|B^\ast T(t)x_0\|^2 \,\dd t, \qquad x_0\in X.
\end{equation}
Then
 $i\RR\subseteq\rho(A_B)$. Furthermore,
 all  mild solutions of~\eqref{eq:sys} satisfy $\|x(t)\|\to 0$ as $t\to\infty$, and
 all classical solutions of~\eqref{eq:sys} satisfy $\|x(t)\|=O(t^{-1/(2\beta)})$ as $t\to\infty$.

If, in addition,
$\|(is-A_B)\inv\|\lesssim 1+|s|^\alpha$ for some $\alpha>0$ and all $s\in\RR$, and
 if there exist $\gamma>\alpha/2+1$ and $\kappa,\ep>0$ such that
\begin{equation}
\label{eq:PhiApproximatLinearity}
\|\phi(u)-\kappa u\|\lesssim \|u\|^\gamma, \qquad  \|u\|\le \ep ,
\end{equation}
then $\|x(t)\|= O(t^{-1/\alpha})$ as $t\to\infty$ for any classical solution of~\eqref{eq:sys}.
\end{thm}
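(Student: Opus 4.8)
The plan is to exploit that, once $\|x(t)\|$ has become small, \eqref{eq:sys} is a small perturbation of the \emph{linearly} damped equation $\dot z = A_\kappa z$ with $A_\kappa := A - \kappa BB^\ast$, and to run, with $A_\kappa$ in place of the free generator $A$, the energy-and-observability scheme underlying the first part of the theorem. By the first part every classical solution satisfies $\|x(t)\| \to 0$, so there is $t_0 \ge 0$ such that for $t \ge t_0$ the vector $u = B^\ast x(t)$ satisfies $\|u\| \le \ep$ (so that \eqref{eq:PhiApproximatLinearity} applies) and is small enough that $\R\langle\phi(u),u\rangle \ge \tfrac\kappa2\|u\|^2$; after a time shift we may assume $t_0 = 0$. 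Writing $r(t) := \phi(B^\ast x(t)) - \kappa B^\ast x(t)$, so that $\|r(t)\| \lesssim \|x(t)\|^\gamma$ on $[0,\infty)$, the solution then solves $\dot x = A_\kappa x - Br(\cdot)$, and $A_\kappa$ is dissipative and generates a contraction semigroup $(e^{tA_\kappa})_{t\ge0}$.

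The crucial linear input is that the resolvent hypothesis persists for $A_\kappa$ and lifts the observability level from $\beta$ to $\alpha/2$ for the damped flow: there should be $T,C>0$ with
\begin{equation}\label{eq:plan-obs}
\|(I - A_\kappa)^{-\alpha/2} z_0\|^2 \le C \int_0^T \|B^\ast e^{tA_\kappa} z_0\|^2 \, \dd t, \qquad z_0 \in X.
\end{equation}
For the transfer of the bound $\|(is-A_B)\inv\| \lesssim 1+|s|^\alpha$ from $A_B = A - BB^\ast$ to $A_\kappa$, one checks (using only dissipativity) that $i\RR \subseteq \rho(A_c)$ for every $c>0$ by a Weyl-sequence argument starting from $i\RR\subseteq\rho(A_B)$, that $\|B^\ast(is-A_c)\inv B\| \le 1/c$ uniformly in $s$, and that $\|(is-A_c)\inv B\| + \|B^\ast(is-A_c)\inv\| \lesssim \|(is-A_c)\inv\|^{1/2}$; a Sherman--Morrison identity relating $(is-A_\kappa)\inv$ to $(is-A_B)\inv$ then preserves the polynomial bound. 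Estimate \eqref{eq:plan-obs} should then follow, via the Borichev--Tomilov theorem, from the (by now standard) characterisation of polynomial decay of damped semigroups in terms of weak observability estimates. I expect this whole linear step --- in particular deducing the time-domain estimate \eqref{eq:plan-obs} at level $\alpha/2$ from a resolvent bound on $i\RR$ --- to be the main obstacle; the remainder is an adaptation of the first part.

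Granting \eqref{eq:plan-obs}, fix $t \ge 0$, put $z(s) := e^{(s-t)A_\kappa} x(t)$ and $w(s) := x(s) - z(s)$ for $s \in [t,t+T]$; then $\dot w = A_\kappa w - Br(\cdot)$ with $w(t) = 0$, so $w(s) = -\int_t^s e^{(s-\sigma)A_\kappa} B r(\sigma)\, \dd\sigma$, and since $B$, $B^\ast$ are bounded and $(e^{tA_\kappa})_{t\ge0}$ is contractive,
\[
\int_t^{t+T} \|B^\ast w(s)\|^2 \, \dd s \lesssim \int_t^{t+T} \|r(\sigma)\|^2 \, \dd\sigma \lesssim \int_t^{t+T} \|x(\sigma)\|^{2\gamma} \, \dd\sigma \le T\, E(t)^\gamma,
\]
where $E(t) := \|x(t)\|^2$ is non-increasing. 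Using $z = x - w$ and applying \eqref{eq:plan-obs} with $z_0 = x(t)$ gives $\|(I-A_\kappa)^{-\alpha/2} x(t)\|^2 \lesssim \int_t^{t+T} \|B^\ast x(s)\|^2\,\dd s + E(t)^\gamma$; and the energy balance \eqref{eq:energy_balance} together with $\R\langle\phi(B^\ast x(s)),B^\ast x(s)\rangle \ge \tfrac\kappa2 \|B^\ast x(s)\|^2$ (for $s\ge0$) bounds $\int_t^{t+T}\|B^\ast x(s)\|^2\,\dd s$ by a constant times $E(t) - E(t+T)$. Finally, for a classical solution $\|(I-A_\kappa)x(t)\| = \|x(t) - \dot x(t) - B\phi(B^\ast x(t)) + \kappa BB^\ast x(t)\|$ is bounded uniformly in $t$ (since $\|\dot x(\cdot)\|$ is non-increasing and $\phi$ is locally Lipschitz with $\phi(0)=0$), so the moment inequality for fractional powers of $I-A_\kappa$ gives $E(t)^{1+\alpha/2} \lesssim \|(I-A_\kappa)^{-\alpha/2} x(t)\|^2$. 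Combining these estimates,
\begin{equation}\label{eq:plan-rec}
E(t)^{1+\alpha/2} \lesssim \bigl(E(t) - E(t+T)\bigr) + E(t)^\gamma, \qquad t \ge 0,
\end{equation}
and here the hypothesis $\gamma > \alpha/2 + 1$ is exactly what is needed: as $E(t)\to0$, the term $E(t)^\gamma$ is eventually dominated by half of $E(t)^{1+\alpha/2}$, so \eqref{eq:plan-rec} reduces to $E(t+T) \le E(t) - cE(t)^{1+\alpha/2}$ for all large $t$, and a standard lemma on such recursions yields $E(t) = O(t^{-2/\alpha})$, i.e.\ $\|x(t)\| = O(t^{-1/\alpha})$ for every classical solution.
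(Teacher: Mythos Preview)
Your argument for the second half stands or falls on the observability estimate you label \eqref{eq:plan-obs}, and you correctly identify it as the main obstacle; unfortunately it is a genuine gap rather than a routine step. The implication you need---from a polynomial resolvent bound $\|(is-A_\kappa)^{-1}\|\lesssim 1+|s|^\alpha$ to a \emph{time-domain} weak observability inequality at level $\alpha/2$ on a fixed interval $[0,T]$---is not known in the generality of Theorem~\ref{thm:main}. The paper itself establishes such a converse only in Proposition~\ref{prp:obs}, under the extra hypotheses that $A$ is skew-adjoint with simple, uniformly separated eigenvalues, and the proof relies on Ingham's inequality, which has no analogue here. Borichev--Tomilov goes the other way (resolvent $\Rightarrow$ decay), and the energy identity $2\kappa\int_0^T\|B^\ast e^{sA_\kappa}z_0\|^2\,\dd s\ge\|z_0\|^2-\|e^{TA_\kappa}z_0\|^2$ gives a lower bound in terms of the dissipated energy, but there is no mechanism forcing $\|z_0\|^2-\|e^{TA_\kappa}z_0\|^2\gtrsim\|(I-A_\kappa)^{-\alpha/2}z_0\|^2$ uniformly in $z_0$ for fixed $T$. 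Once \eqref{eq:plan-obs} is granted, your derivation of the recursion \eqref{eq:plan-rec} and the absorption of $E(t)^\gamma$ via $\gamma>\alpha/2+1$ are correct; the problem is getting there.

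The paper avoids this issue entirely by taking a different route that uses only the semigroup decay $\|T_\kappa(t)A_\kappa^{-1}\|=O(t^{-1/\alpha})$ (which \emph{does} follow from the resolvent bound) together with the a~priori rate $\|x(t)\|=O(t^{-1/(2\beta)})$ from the first part. One writes $x(t)$ via the variation of parameters formula for $A_\kappa$, splits the Duhamel integral at an intermediate time $t_\theta\approx t-\tfrac12 t^\theta$, and estimates the three resulting terms separately: the homogeneous part decays like $t^{-1/\alpha}$, the middle piece like $t^{-\theta/\alpha}$, and the tail (controlled by admissibility of $B$ and $\|y(s)\|\lesssim\|x(s)\|^\gamma$) like $t^{-(\gamma/(2\beta)-\theta/2)}$. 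Optimising over $\theta$ either gives $t^{-1/\alpha}$ outright or an improved rate $t^{-1/(2\sigma\beta)}$ with $\sigma=(\alpha+2)/(2\gamma)$; the hypothesis $\gamma>\alpha/2+1$ is exactly $\sigma<1$, so iterating the argument with $\beta$ replaced by $\sigma\beta,\sigma^2\beta,\dots$ eventually reaches the target rate. No observability at level $\alpha/2$ is ever needed.
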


\begin{proof}
Let $x(\cdot)$ be a classical solution of~\eqref{eq:sys}, and suppose there exists $t_0\ge0$ such that $\|B^*x(t)\|\le\delta$ for all $t\ge t_0$. Furthermore, let $L_\delta>0$ be such that $\|\phi(u)\|\le L_\delta\|u\|$ for all $u\in U$ with $\|u\|\le\delta$. Let $\tau>0$. For $u\in L^2(0,\tau;U)$ we define 
 $$(\mathbb{F}_\tau u)(t)=\int_0^t B^\ast T(t-s)Bu(s)\,\dd s,\qquad 0\le t\le\tau.$$
Then $\mathbb{F}_\tau\in \B(L^2(0,\tau;U))$ and 
$$B^\ast T(s)x(t) = B^\ast x(t+s)+ (\mathbb{F}_\tau\phi(B^\ast x(t+\cdot)))(s),\qquad 0\le s\le\tau,\ t\ge0,$$
by the variation of parameters formula, and hence
$$
\|B^\ast T(\cdot)x(t)\|_{L^2(0,\tau;U)}\lesssim\|B^\ast x(t+\cdot)\|_{L^2(0,\tau;U)}+\|\phi(B^\ast x(t+\cdot))\|_{L^2(0,\tau;U)}
$$
for all $t\ge0$. For $t\ge t_0$ it follows that 
$$\|B^\ast T(\cdot)x(t)\|_{L^2(0,\tau;U)}\lesssim\|B^\ast x(t+\cdot)\|_{L^2(0,\tau;U)},$$
 and combining this with~\eqref{eq:ObsEstimate} gives
$$\|(I-A)^{-\beta} x(t)\|
\lesssim  \|B^\ast x(t+\cdot)\|_{L^2(0,\tau;U)}, \qquad t\ge t_0.$$
Now let $k\in\NN$ be such that $k\tau\ge t_0$. Using~\eqref{eq:energy_balance} and~\eqref{eq:PhiAssStab} it follows that
$$\begin{aligned}
\|x(k\tau)\|^2&-\|x((k+1)\tau)\|^2= 2\int_0^\tau \R \langle \phi(B^\ast x(k\tau+s)),B^\ast x(k\tau+s)\rangle\, \dd s\\
&\gtrsim \|B^\ast x(k\tau+\cdot))\|_{L^2(0,\tau;U)}^2\gtrsim\|(I-A)^{-\beta}x(k\tau) \|^2,
\end{aligned}
$$
where the implicit constants are independent of $k$. Noting that $x(k\tau)\in D(A)$, we may apply the moment inequality~\cite[Thm.~II.5.34]{EngNag00book} to obtain
$$\|x(k\tau)\|\le \|(I-A)x(k\tau)\|^{\frac{\beta}{1+\beta}}\|(I-A)^{-\beta}x(k\tau)\|^{\frac{1}{1+\beta}}.$$
Thus
$$
\|x(k\tau)\|^2-\|x((k+1)\tau)\|^2\gtrsim\frac{\|x((k+1)\tau)\|^{2(1+\beta)}}{\|(I-A)x(k\tau)\|^{2\beta}},
$$
where we have used the fact that $\|x((k+1)\tau)\|\le\|x(k\tau)\|$. Now
$$\begin{aligned}
\|(I-A)x(k\tau)\|&\le \|x(k\tau)\|+\|Ax(k\tau)-B\phi(B^* x(k\tau))\|+\|B\|\|\phi(B^*x(k\tau))\|\\
&\le (1+L_\delta\|B\|^2)\|x(k\tau)\|+\|\dot{x}(k\tau)\|\\
&\le (1+L_\delta\|B\|^2)\|x(t_0)\|+\|\dot{x}(t_0)\|\\
&=(1+L_\delta\|B\|^2)\|x(t_0)\|+\|Ax(t_0)-B\phi(B^*x(t_0))\|\\
&\le 2(1+L_\delta\|B\|^2)\|x(t_0)\|+\|(I-A)x(t_0)\|\\
&\lesssim\|(I-A)x(t_0)\|,
\end{aligned}
$$
where we have used monotonicity of the functions $\|x(\cdot)\|$ and $\|\dot{x}(\cdot)\|$. It follows that
$$\frac{\|x((k+1)\tau)\|^2}{\|(I-A)x(t_0)\|^2}\le\frac{\|x(k\tau)\|^2}{\|(I-A)x(t_0)\|^2}-c\left(\frac{\|x((k+1)\tau)\|^2}{\|(I-A)x(t_0)\|^2}\right)^{1+\beta}$$
for some constant $c>0$ and all sufficiently large $k\ge1$, and hence
$$\|x(k\tau)\|\lesssim \frac{\|(I-A)x(t_0)\|}{(k\tau+1)^{1/(2\beta)}}$$
for all sufficiently large $k\ge1$ by~\cite[Lem.~1.3.4]{AmmNic15book}. By monotonicity of $\|x(\cdot)\|$ we deduce that 
$$\|x(t)\|\lesssim \frac{\|(I-A)x(t_0)\|}{(t+1)^{1/(2\beta)}}$$
for all sufficiently large $t\ge0$. Next we observe that, for any $\delta\ge\|B^*\|\|x_0\|$, the identity function on $U$ defined by $\phi(u)=u$ for all $u\in U$ satisfies our assumptions  with $t_0=0$ and $L_\delta=1$. We deduce that
$\|T_B(t)(I-A_B)\inv\|=O(t^{-1/(2\beta)})$ as $t\to\infty$, where $(T_B(t))_{t\ge0}$ is the $C_0$-semigroup of contractions generated by $A_B$. It follows from~\cite[Thm.~1.1]{BatDuy08} that $i\RR\subseteq\rho(A_B)$. Moreover, $(T_B(t))_{t\ge0}$ is strongly stable in the sense that $\|T_B(t)x\|\to0$ as $t\to\infty$ for all $x\in X$, by a standard density argument. It then follows from~\cite[Thm.~2.2]{CurZwa16} (noting that the result carries over, with the appropriate modifications, to the setting of complex Hilbert spaces, and that the assumptions of compact resolvent and approximate observability can be replaced by strong stability of $(T_B(t))_{t\ge0}$) that whenever $\phi$ satisfies the conditions of our theorem we have $\|x(t)\|\to0$ as $t\to\infty$ for all  mild solutions of~\eqref{eq:sys}. Hence the first part of the proof shows that $\|x(t)\|=O(t^{-1/(2\beta)})$ as $t\to\infty$ for  all classical solutions of~\eqref{eq:sys}, which completes the proof of the first part of the result.

Now assume, in addition, that
$\|(is-A_B)\inv\|\lesssim 1+|s|^\alpha$ for some $\alpha>0$ and all $s\in\RR$, and that
 there exist $\gamma>\alpha/2+1$ and $\kappa,\ep>0$ such that~\eqref{eq:PhiApproximatLinearity} holds. If $\alpha\ge 2\beta$ then the result already follows without any of the additional assumptions from what has already been proved, so we may assume that $\alpha<2\beta$. Recall from the first part of the proof that $\|x(t)\|\to0$ as $t\to\infty$ for all  mild solutions of~\eqref{eq:sys}, and that $\|x(t)\|=O(t^{-1/(2\beta)})$ as $t\to\infty$ for all classical solutions of~\eqref{eq:sys}. Note also that by~\cite[Lem.~2.11(c)]{ChiPau23} the operator $A_\kappa = A-\kappa BB^\ast$ satisfies $i\RR\subseteq\rho(A_\kappa)$ and $\|T_\kappa(t)A_\kappa\inv\|=O(t^{-1/\alpha})$ as $t\to\infty$, where $(T_\kappa(t))_{t\ge0}$ is the contraction semigroup  generated by $A_\kappa$. Let $x(\cdot)$ be a classical solution of~\eqref{eq:sys}, and let $t_0\ge0$ be such that $\|B^*x(t)\|\le\ep$ for all $t\ge t_0$.  We have
 $$\dot{x}(t)=(A-\kappa BB^\ast) x(t) + By(t),\qquad t\ge 0,$$
 where $y(t)=\kappa B^\ast x(t)-\phi(B^\ast x(t))$ for all $t\ge 0$. Note that, by~\eqref{eq:PhiApproximatLinearity} and boundedness of $B$, we have $\|y(t)\|\lesssim \| x(t)\|^{\gamma}$ for $t\ge t_0$. Given $t\ge0$ let $t_\theta=\max\{t_0,t-t^\theta/2\}$, where $\theta\in(0,1]$ is to be chosen later, noting that $t_\theta\to\infty$ and $t-t_\theta\to\infty$ as $t\to\infty$. The variation of parameters formula gives
 \begin{equation}\label{eq:three}
 \begin{aligned}
 x(t)
 = T_\kappa (t-t_0)x(t_0) 
&+ T_\kappa (t-t_\theta)\int_{t_0}^{t_\theta} T_\kappa (t_\theta-s)By(s)\,\dd s\\
\qquad &+ \int_{t_\theta}^t T_\kappa (t-s)By(s)\,\dd s
 \end{aligned}
 \end{equation}
for all $t\ge t_0$.  Since $x(t)\in D(A)$ for all $t\ge 0$, we have  $\|T_\kappa (t-t_0)x(t_0)\| =O(  t^{-1/\alpha})$ as $t\to\infty$. 
 Moreover, since $\phi$ is locally Lipschitz continuous and $Bx(\cdot)$ is continuously differentiable, we have $\int_{t_0}^{t_\theta} T_\kappa (t_\theta-s)By(s)\,\dd s\in D(A)$ for all $t\ge t_0$ by~\cite[Cor.~3.1.17]{AreBat11book}.
The second term in~\eqref{eq:three} therefore satisfies
$$\begin{aligned}
\bigg\|T_\kappa (t-t_\theta)\int_{t_0}^{t_\theta} T_\kappa (t_\theta&-s)By(s)\,\dd s\bigg\|\\ &\le\|T_\kappa (t-t_\theta)A_\kappa\inv\|\bigg\|A_\kappa\int_{t_0}^{t_\theta} T_\kappa (t_\theta-s)By(s)\,\dd s\bigg\|
\end{aligned}$$
 for all $t\ge t_\theta$. Now $\|T_\kappa (t-t_\theta)\smash{A_\kappa\inv}\|=O(t^{-\theta/\alpha})$ as $t\to\infty$. On the other hand,  we have 
$$ \int_{t_0}^{t_\theta} T_\kappa (t-s)By(s)\,\dd s = x(t_\theta)-T_\kappa (t_\theta-t_0)x(t_0),\qquad t\ge t_0,$$ 
by the variation of parameters formula. Thus 
$$\begin{aligned}
\bigg\|A_\kappa&\int_{t_0}^{t_\theta} T_\kappa (t_\theta-s)By(s)\,\dd s\bigg\|\le \|A_\kappa x(t_\theta)\|+\|A_\kappa T_\kappa (t_\theta-t_0)x(t_0)\|\\
&\le  \|Ax(t_\theta) - B\phi(B^\ast x(t_\theta))\|+\|By(t_\theta)\|+\|T_\kappa (t_\theta-t_0)A_\kappa x(t_0)\|\\
&\lesssim \|\dot{x}(t_0)\|+\|x(t_0)\|^\gamma+\|A_\kappa x(t_0)\|
\end{aligned}$$
for all $t\ge t_0$, where we have used contractivity of $(T_\kappa(t))_{t\ge0}$ and the fact that both $\|x(\cdot)\|$ and $\|\dot x(\cdot)\|$ are non-increasing functions. Thus the second term in~\eqref{eq:three} satisfies
$$\bigg\|T_\kappa (t-t_\theta)\int_{t_0}^{t_\theta} T_\kappa (t_\theta-s)By(s)\,\dd s\bigg\|=O({t^{-\theta/\alpha}}),\qquad t\to\infty.$$
The third and final term in~\eqref{eq:three} satisfies
$$ \int_{t_\theta}^t T_\kappa (t-s)By(s)\,\dd s
= \int_{0}^t T_\kappa (t-s)By_\theta(s)\,\dd s,$$
where $y_\theta(t)=0$ for $t\in[0,t_\theta)$ and $y_\theta(t)=y(t)$ for $t\ge t_\theta$. By~\cite[Lem.~2.2.6]{Oos00book} the maps $\Phi_t\in\B(L^2(0,t;U),X)$, defined by $\Phi_t u=\int_0^t T_\kappa (t-s)Bu(s)\,\dd s$ for $t\ge 0$ and $u\in L^2(0,t;U)$, have uniformly bounded operator norms; see also~\cite[Thm.~6.5.6]{CurZwa20book}, \cite[Cor.~6.1]{Sta02}. Since $\|y(t)\|\lesssim \|x(t)\|^\gamma$ for all $t\ge t_0$, and since $\|x(t)\|=O(t^{-1/(2\beta)})$ as $t\to\infty$ by the first part of the result, we deduce that
$$\bigg\| \int_{t_\theta}^t T_\kappa (t-s)By(s)\,\dd s\bigg\|^2\lesssim  \int_{t_\theta}^t \|y(s)\|^2\,\dd s\lesssim\int_{t_\theta}^t\frac{\dd s}{(1+s)^{\gamma/\beta}}\le\frac{t-t_\theta}{(1+t_\theta)^{\gamma/\beta}}$$
for all $t\ge t_0$.
It follows that the three terms in~\eqref{eq:three} are of order
$O(t^{-1/\alpha})$, $O(t^{-\theta/\alpha})$ and  $O(t^{-\mu})$, respectively, as $t\to\infty$, where  $\mu=\gamma/(2\beta)-\theta/2$. If  $\beta\le \alpha\gamma/(\alpha+2)$
we set $\theta=1$. Then $\mu\ge1/\alpha$, so the third term decays at least as fast as the first two, and we obtain $\|x(t)\|=O(t^{-1/\alpha})$ as $t\to\infty$, which is what we wanted to prove. On the other hand, if $\beta> \alpha\gamma/(\alpha+2)$ we set $\theta=\alpha\gamma/(\beta(\alpha+2))$. Then $\theta\in(0,1)$ and the rate is determined by the second and third terms, which decay at the same speed, giving $\|x(t)\|=O(t^{-1/(2\sigma\beta)})$ as $t\to\infty$, where $\sigma=(\alpha+2)/(2\gamma)$. The latter estimate is strictly worse than the one we wish to prove. On the other hand, since $\gamma>\alpha/2+1$ by assumption, we have $\sigma\in(0,1)$ and hence this decay rate is strictly \emph{better} than the estimate $\|x(t)\|=O(t^{-1/(2\beta)})$ as $t\to\infty$ coming from the first part of the result. In this second case, we may therefore repeat the argument with $\beta$ replaced by $\sigma\beta$. As above, we find that if $\sigma\beta\le \alpha\gamma/(\alpha+2)$ then $\|x(t)\|=O(t^{-1/\alpha})$ as $t\to\infty$ and the proof is complete, while if $\sigma\beta> \alpha\gamma/(\alpha+2)$ then  $\|x(t)\|=O\smash{(t^{-1/(2\sigma^2\beta)})}$ as $t\to\infty$. If necessary we may now iterate this process, terminating after $k\ge0$ repetitions if $\sigma^k\beta\le \alpha\gamma/(\alpha+2)$, giving the desired the rate $\|x(t)\|=O(t^{-1/\alpha})$ as $t\to\infty$, and otherwise replacing $\sigma^k\beta$ by $\sigma^{k+1}\beta$. Since $\sigma^k\beta\le \alpha\gamma/(\alpha+2)$ for all sufficiently large $k\ge0$ by virtue of the fact that $\sigma\in(0,1)$, the process must eventually terminate yielding $\|x(t)\|=O(t^{-1/\alpha})$ as $t\to\infty$, as required.
\end{proof}

\begin{rems}
\begin{enumerate}[(a)]
\item If~\eqref{eq:PhiApproximatLinearity} holds then necessarily $\phi(0)=0$, so this condition can be omitted whenever  we assume~\eqref{eq:PhiApproximatLinearity}. It moreover follows from~\eqref{eq:PhiApproximatLinearity} that $\phi$ is differentiable at zero, with derivative $D\phi(0)=\kappa I$, and that $\R\langle\phi(u),u\rangle\gtrsim\|u\|^2$ for all $u\in U$ of sufficiently small norm. The latter implies that whenever we are assuming~\eqref{eq:PhiApproximatLinearity} we need only verify the second part of~\eqref{eq:PhiAssStab}.
\item The class of functions $\phi\colon U\to U$ to which the Theorem~\ref{thm:main} can be applied includes \emph{radial} functions defined by  $\phi(0)=0$ and 
$$\phi(u)=\psi(\|u\|)\frac{u}{\|u\|},\qquad u\in U\setminus\{0\},$$
 where $\psi\colon\RR_+\to\RR_+$ is a non-decreasing and locally Lipschitz continuous function satisfying $\psi(0)=0$. Then $\phi$ is monotone and locally Lipschitz continuous, and~\eqref{eq:PhiAssStab} holds provided there exists $\delta>0$ such that $\psi(r)\gtrsim r$ for  $0\le r\le\delta$. Furthermore, if $\psi$ is twice continuously differentiable with $\psi'(0)>0$ a simple estimate using Taylor's theorem shows that~\eqref{eq:PhiApproximatLinearity} holds with $\kappa=\psi'(0)$ for some $\ep>0$ and some $\gamma\ge2$. A concrete example is the function $\psi$ defined by $\psi(r)=\tanh(r)$ for $r\ge0$. In this case, there exists $\ep>0$ such that~\eqref{eq:PhiApproximatLinearity} holds for $\kappa=1$ and $\gamma=3$.
\item If in addition to the assumptions made in the first part of Theorem~\ref{thm:main} we assume that $\phi$ is \emph{globally} Lipschitz continuous and satisfies $\R\langle\phi(u),u\rangle\gtrsim  \|u\|^2$ for \emph{all} $u\in U$, then a slight adaptation of the proof yields \emph{uniform} polynomial decay of the form 
$$\|x(t)\| \lesssim \frac{\|x(0)\|_{D(A)}}{(1+t)^{1/(2\beta)}} , \qquad t\ge 0,$$
for all classical solutions of~\eqref{eq:sys}, where $\|\cdot\|_{D(A)}$ denotes the graph norm of $A$.
This conclusion is analogous to earlier results showing that exponential stability may be preserved  under global
growth conditions on the non-linear damping; see for instance~\cite{LasTat93}.
\item It is clear that the above proof relies crucially on boundedness of $B$. On the other hand, the results in~\cite{Ala05, AmmTuc01, ChiPau23} allow for certain `unbounded' operators $B$ mapping into an extrapolation space of $X$. We leave open to what extent our results can be generalised to this considerably more challenging setting.
\end{enumerate}
\end{rems}

As a simple consequence of Theorem~\ref{thm:main} we obtain the following improvement of~\cite[Thm.~4.4]{ChiPau23} and~\cite[Thm.~4.2]{PauSei25a} in the linear case. Both of these earlier results include the assumption that $\beta\in(0,1]$, and in~\cite[Thm.~4.4]{ChiPau23} the further assumption that $D(A)=D(A^*)$ is added. Our present approach requires neither. As before, we assume that $A$ is the generator of a contraction semigroup $\T$ on a Hilbert space $X$, and that $B\in \B(U,X)$ for some other Hilbert space $U$. Moreover, we let $A_B=A-BB^*$ with domain $D(A_B)=D(A)$, and we let $(T_B(t))_{t\ge0}$ denote the contraction semigroup on $X$ generated by $A_B$.

\begin{cor}
\label{cor:LinearABBimproved}
 Suppose that there exist $\beta,\tau,c_\tau>0$ such that
$$c_\tau\|(I-A)^{-\beta} x_0\|^2 
\le \int_0^\tau \|B^\ast T(t)x_0\|^2 \,\dd t, \qquad x_0\in X.$$
Then
 $i\RR\subseteq\rho(A_B)$, $\|T_B(t)x\|\to 0$ as $t\to\infty$ for all $x\in X$, and $\|T_B(t)x\|=o(t^{-1/(2\beta)})$ as $t\to\infty$ for all $x\in D(A)$.
\end{cor}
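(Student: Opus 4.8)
The plan is to derive all three assertions from Theorem~\ref{thm:main} applied with $\phi$ taken to be the identity map on $U$, the only additional ingredient being a short semigroup-splitting argument that sharpens the resulting $O$-estimate to a little-$o$ one.

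First I would check that $\phi=\id$ meets the hypotheses of Theorem~\ref{thm:main}: it is monotone, since $\R\langle u_1-u_2,u_1-u_2\rangle=\|u_1-u_2\|^2\ge0$; it is globally, and hence locally, Lipschitz continuous and satisfies $\phi(0)=0$; and \eqref{eq:PhiAssStab} holds for \emph{every} $\delta>0$, because $\R\langle u,u\rangle=\|u\|^2$ equals $\|u\|^2$ when $\|u\|\le\delta$ and exceeds $\delta^2$ when $\|u\|>\delta$. With this choice of $\phi$ the equation \eqref{eq:sys} reduces to the linear Cauchy problem $\dot x(t)=A_Bx(t)$, so its classical solutions are precisely the orbits $T_B(\cdot)x_0$ with $x_0\in D(A)$ and its generalised solutions are the orbits $T_B(\cdot)x_0$ with $x_0\in X$. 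Theorem~\ref{thm:main}, together with the estimates established in the course of its proof, then yields $i\RR\subseteq\rho(A_B)$, strong stability of $(T_B(t))_{t\ge0}$ (so that $\|T_B(t)x\|\to0$ for every $x\in X$), and the uniform bound $\|T_B(t)(I-A_B)\inv\|=O(t^{-1/(2\beta)})$ as $t\to\infty$; in particular $\|T_B(t)x\|=O(t^{-1/(2\beta)})$ for every $x\in D(A)$. Note that no restriction on $\beta$ is needed here, since the argument in the proof of Theorem~\ref{thm:main} is valid for all $\beta>0$.

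To upgrade the $O$-estimate to a little-$o$ one, I would fix $x\in D(A)=D(A_B)$ and split the orbit at time $t/2$. Since $T_B$ maps $D(A_B)$ into itself and commutes with $(I-A_B)\inv$, this gives
$$T_B(t)x=T_B(t/2)\,(I-A_B)\inv\,T_B(t/2)\,(I-A_B)x,$$
and hence $\|T_B(t)x\|\le\|T_B(t/2)(I-A_B)\inv\|\,\|T_B(t/2)(I-A_B)x\|$. By the previous step the first factor is $O(t^{-1/(2\beta)})$, while the second factor tends to $0$ as $t\to\infty$ because $(I-A_B)x\in X$ and $(T_B(t))_{t\ge0}$ is strongly stable. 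Multiplying these two bounds yields $\|T_B(t)x\|=o(t^{-1/(2\beta)})$ as $t\to\infty$, which is the claim.

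I do not expect any serious obstacle: the corollary is essentially Theorem~\ref{thm:main} specialised to the linear case combined with a standard splitting trick. The two points that call for a little care are the verification of the hypotheses of Theorem~\ref{thm:main} for $\phi=\id$ (in particular that one is free to take any $\delta>0$, so that the condition $\delta\ge\|B^*\|\|x_0\|$ used in its proof is harmless), and the observation that what is actually extracted from that proof is the \emph{uniform} decay $\|T_B(t)(I-A_B)\inv\|=O(t^{-1/(2\beta)})$ rather than merely pointwise decay on $D(A)$ — it is precisely this uniformity that makes the split at time $t/2$ effective, and it is exactly what the proof of Theorem~\ref{thm:main} delivers.
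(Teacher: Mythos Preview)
Your proposal is correct. The first part---applying Theorem~\ref{thm:main} with $\phi=\id$ to obtain $i\RR\subseteq\rho(A_B)$, strong stability, and the $O(t^{-1/(2\beta)})$ bound---matches the paper's approach exactly; you also correctly observe that the uniform bound $\|T_B(t)(I-A_B)\inv\|=O(t^{-1/(2\beta)})$ is already established inside the proof of Theorem~\ref{thm:main}, whereas the paper's proof of the corollary recovers it \emph{a posteriori} from the pointwise bound via the uniform boundedness principle.

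The one genuine difference is in the passage from $O$ to $o$. The paper simply invokes \cite[Thm.~2.4]{BorTom10}, while you give the elementary splitting $T_B(t)x=T_B(t/2)(I-A_B)\inv\,T_B(t/2)(I-A_B)x$ and combine the uniform operator-norm decay of the first factor with the strong convergence to zero of the second. Your argument is more self-contained and avoids the external reference; the paper's route is shorter on the page but relies on the (much deeper) Borichev--Tomilov machinery. Both are standard and entirely valid here.
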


\begin{proof}
Applying Theorem~\ref{thm:main} with $\phi\colon U\to U$ taken to be the identity map we obtain $i\RR\subseteq \rho(A_B)$ and  $\|T_B(t)x_0\|=O(t^{-1/(2\beta)})$ as $t\to\infty$ for all $x_0\in D(A)$. Since the semigroup $(T_B(t))_{t\ge0}$ is contractive and $D(A)$ is dense in $X$ a simple approximation argument shows that $\|T_B(t)x_0\|\to0$ as $t\to\infty$ for all $x_0\in X$. Moreover, an application of the uniform boundedness theorem yields $\|T_B(t)\smash{A_B\inv}\|=O(t^{-1/(2\beta)})$ as $t\to\infty$, and hence $\|T_B(t)x_0\|=o(t^{-1/(2\beta)})$ as $t\to\infty$ for all $x_0\in D(A)$ by~\cite[Thm.~2.4]{BorTom10}.
\end{proof}

We conclude this section by showing that resolvent estimates for $A$ along the imaginary axis provide a sufficient condition for the observability estimate~\eqref{eq:ObsEstimate} to hold, at least in the special case when $A$ is skew-adjoint and has uniform spectral gap.

\begin{prp}\label{prp:obs}
Suppose that $A$ is skew-adjoint and that $\sigma(A)$ consists of simple and uniformly separated eigenvalues. Suppose further that $i\RR\subseteq\rho(A_B)$ and that $\|(is-A_B)\inv\|\lesssim 1+|s|^\alpha$ for some $\alpha>0$ and all $s\in\RR$. Then there exist $\tau,c_\tau>0$ such that~\eqref{eq:ObsEstimate} holds with $\beta=\alpha/2$.
\end{prp}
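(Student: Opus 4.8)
The plan is to diagonalise the skew-adjoint operator $A$ and then combine a Hilbert-space-valued Ingham inequality with a pointwise lower bound for $\|B^\ast e_n\|$, where $(e_n)_n$ is the orthonormal basis of eigenvectors of $A$. Since $A$ is skew-adjoint with pure point spectrum consisting of simple eigenvalues, such a basis exists, with $Ae_n=i\mu_n e_n$ and $\mu_n\in\RR$ distinct; uniform separation means $\gamma:=\inf_{n\neq m}|\mu_n-\mu_m|>0$, and the hypothesis $i\RR\subseteq\rho(A_B)$ forces $\|B^\ast e_n\|>0$ for every $n$ (otherwise $e_n$ would be an eigenvector of $A_B$ with eigenvalue $i\mu_n$). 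For $x_0=\sum_n c_ne_n$ we have $B^\ast T(t)x_0=\sum_n c_ne^{i\mu_n t}B^\ast e_n$ and $\|(I-A)^{-\alpha/2}x_0\|^2=\sum_n|c_n|^2(1+\mu_n^2)^{-\alpha/2}$, so it suffices to prove, for some $\tau>0$ and all finitely supported $(c_n)$, the inequality
\[
\int_0^\tau\Bigl\|\sum_n c_ne^{i\mu_n t}B^\ast e_n\Bigr\|^2\,\dd t\gtrsim\sum_n|c_n|^2\|B^\ast e_n\|^2
\]
together with the pointwise bound $\|B^\ast e_n\|^2\gtrsim(1+\mu_n^2)^{-\alpha/2}$ for all $n$, and then to pass to general $x_0\in X$ using that both sides of \eqref{eq:ObsEstimate} depend continuously on $x_0$. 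For the displayed inequality I would invoke Ingham's inequality: for any $\tau>2\pi/\gamma$ its standard proof, via an auxiliary kernel, carries over verbatim to exponential sums with coefficients in a Hilbert space, here $U$, and I would fix such a $\tau$ from now on.

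The pointwise bound is the heart of the matter, and it will come from the resolvent hypothesis via a near-eigenvector construction. Let $Q_n$ denote the orthogonal projection of $X$ onto $\{e_n\}^\perp$ and consider the operator $a_n=Q_n(A_B-i\mu_n)Q_n$ on $\{e_n\}^\perp$, which acts as $a_ny=(A-i\mu_n)y-Q_nBB^\ast y$ for $y\in D(A)\cap\{e_n\}^\perp$; hence $\R\langle a_ny,y\rangle=-\|B^\ast y\|^2$, whereas $\|(A-i\mu_n)y\|\ge\gamma\|y\|$ on $\{e_n\}^\perp$ by uniform separation. A compactness argument then shows that $a_n$ is boundedly invertible with $\rho^\ast:=\sup_n\|a_n^{-1}\|<\infty$: otherwise one finds unit vectors $y_j\in D(A)\cap\{e_{n_j}\}^\perp$ with $\|a_{n_j}y_j\|\to0$, and then the identity $|\R\langle a_{n_j}y_j,y_j\rangle|=\|B^\ast y_j\|^2$ forces $\|B^\ast y_j\|\to0$, hence $\|(A-i\mu_{n_j})y_j\|\to0$, contradicting the lower bound; applying the same reasoning to $a_n^\ast$, which has the same structure, gives surjectivity. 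Now fix $n$ with $\|B^\ast e_n\|\le(2\rho^\ast\|B\|)^{-1}$, put $r_n=Q_nBB^\ast e_n$, $w_n=a_n^{-1}r_n\in\{e_n\}^\perp$ and $y_n=e_n+w_n$. A direct computation using $(i\mu_n-A)e_n=0$ and $a_nw_n=r_n$ gives $(i\mu_n-A_B)y_n=\bigl(\|B^\ast e_n\|^2+\langle B^\ast w_n,B^\ast e_n\rangle\bigr)e_n$, which has norm at most $(1+\rho^\ast\|B\|^2)\|B^\ast e_n\|^2$ because $\|w_n\|\le\rho^\ast\|r_n\|\le\rho^\ast\|B\|\,\|B^\ast e_n\|$; the same bound on $\|w_n\|$ gives $\|y_n\|\ge1/2$. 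Dividing, $\|(i\mu_n-A_B)^{-1}\|\ge\|y_n\|/\|(i\mu_n-A_B)y_n\|\gtrsim\|B^\ast e_n\|^{-2}$, and combining this with the resolvent hypothesis $\|(i\mu_n-A_B)^{-1}\|\lesssim 1+|\mu_n|^\alpha\lesssim(1+\mu_n^2)^{\alpha/2}$ yields $\|B^\ast e_n\|^2\gtrsim(1+\mu_n^2)^{-\alpha/2}$ for all such $n$. For the remaining indices $\|B^\ast e_n\|$ exceeds the fixed positive number $(2\rho^\ast\|B\|)^{-1}$, so since $(1+\mu_n^2)^{-\alpha/2}\le1$ the bound holds there too (with a possibly smaller implicit constant), completing the proof of the pointwise estimate.

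Combining the two ingredients and extending from finite combinations of the $e_n$ to all of $X$ by continuity then gives \eqref{eq:ObsEstimate} with $\beta=\alpha/2$ and any $\tau>2\pi/\gamma$. The main obstacle I anticipate is the uniform reduced-resolvent bound $\rho^\ast<\infty$: the naive choice $y_n=e_n$ gives only $\|(i\mu_n-A_B)^{-1}\|\ge\|e_n\|/\|BB^\ast e_n\|\gtrsim\|B^\ast e_n\|^{-1}$ and hence the weaker exponent $\beta=\alpha$, and it is precisely the correction term $w_n$ — and with it the uniform invertibility of $a_n$ — that recovers the missing power of $\|B^\ast e_n\|$ and the sharp value $\beta=\alpha/2$.
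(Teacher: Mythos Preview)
Your proof is correct and follows the same overall architecture as the paper's: diagonalise the skew-adjoint operator, apply a Hilbert-space-valued Ingham inequality for $\tau>2\pi/\gamma$, and feed in the pointwise lower bound $\|B^\ast e_n\|^2\gtrsim(1+\mu_n^2)^{-\alpha/2}$. The only substantive difference is in how that pointwise bound is obtained. The paper simply cites \cite[Prop.~5.1]{ChiPau23}, which asserts the wavepacket condition $\|B^\ast x\|\gtrsim(1+|\mu_n|)^{-\alpha/2}\|x\|$ for $x\in\Ker(i\mu_n-A)$ directly from the resolvent estimate. You instead supply a self-contained near-eigenvector argument: the uniform reduced-resolvent bound $\rho^\ast<\infty$ on $\{e_n\}^\perp$ lets you correct $e_n$ by $w_n=a_n^{-1}Q_nBB^\ast e_n$ so that $(i\mu_n-A_B)y_n$ is parallel to $e_n$ with norm $\lesssim\|B^\ast e_n\|^2$, which is exactly what is needed to extract the square. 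This is essentially a proof of the cited proposition in the simple-eigenvalue case, so nothing is genuinely new, but your version has the virtue of being independent of the external reference and of making transparent why the naive choice $y_n=e_n$ loses a factor and only yields $\beta=\alpha$.
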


\begin{proof}
Let us denote the eigenvalues of $A$ by $is_k$, $k\ge1$, where $s_k\in\RR$ for all $k$. By assumption there exists $\delta>0$ such that $|s_k-s_j|\ge\delta$ for all $j,k\ge1$ with $j\ne k$. Let $\beta=\alpha/2$. By~\cite[Prop.~5.1]{ChiPau23}  we have
\begin{equation}
\label{eq:WavepacketSimple}
\|B^\ast x\|\gtrsim \frac{\|x\|}{1+|s_k|^\beta} 
\end{equation}
for all $k\ge1$ and $x\in \Ker(is_k-A)$. For $k\ge1$, let $e_k$ be a normalised eigenvector corresponding to the eigenvalue $is_k$. Then $\{e_k:k\ge1\}$ is an orthonormal basis for $X$, and it follows from~\eqref{eq:WavepacketSimple} that  $(1+|s_k|^2)^\beta\|B^\ast e_k\|^2\gtrsim1$ for all $k\ge1$. Let $\tau>2\pi/\delta$. It follows from Ingham's inequality~\cite[Prop.~1.5.2]{AmmNic15book} that
$$\begin{aligned}
\int_0^\tau \|B^\ast T(t)x_0\|^2 \,\dd t&= \int_0^\tau \bigg\|\sum_{k=1}^\infty e^{is_kt}\langle x_0,e_k\rangle B^\ast e_k\bigg\|^2\, \dd t\\
&\gtrsim \sum_{k=1}^\infty |\langle x_0,e_k\rangle|^2 \|B^\ast e_k\|^2\\
&=\sum_{k=1}^\infty |\langle (I-A)^{-\beta}x_0,e_k\rangle|^2 (1+|s_k|^2)^\beta\|B^\ast e_k\|^2\\
&\gtrsim \sum_{k=1}^\infty |\langle (I-A)^{-\beta}x_0,e_k\rangle|^2=\|(I-A)^{-\beta}x_0\|^2
\end{aligned}
$$
for all $x_0\in X$, and the proof is complete.
\end{proof}

\begin{rems}
\begin{enumerate}[(a)]
\item The estimate in~\eqref{eq:WavepacketSimple} is sometimes referred to as a \emph{wave\-packet condition}. The above proof shows that, under a uniform spectral gap condition, we may pass from a resolvent estimate for $A_B$ to a wave\-packet condition and thence to an observability estimate. In~\cite[Sect.~3]{ChiPau23} it is shown, conversely, that even without the spectral gap condition general wavepacket conditions of this kind can be used to obtain resolvent estimates for $A_B$, which in turn imply decay rates for classical orbits of the damped semigroup  $(T_B(t))_{t\ge0}$.
\item By applying more sophisticated versions of Ingham's inequality such as~\cite[Cor.~1.5.4]{AmmNic15book} we may weaken the uniform gap condition to allow  for a degree of repetition and clustering in the  eigenvalues of $A$.
\end{enumerate}
\end{rems}

Combining Proposition~\ref{prp:obs} with Theorem~\ref{thm:main} gives the following result.

\begin{cor}\label{cor:gap}
Suppose that $A$ is skew-adjoint,  $\sigma(A)$ consists of simple and uniformly separated eigenvalues,  and that $i\RR\subseteq\rho(A_B)$ and $\|(is-A_B)\inv\|\lesssim 1+|s|^\alpha$ for some $\alpha>0$ and all $s\in\RR$. Suppose further  that $\phi\colon U\to U$ is monotone and locally Lipschitz continuous and that~\eqref{eq:PhiAssStab} holds for some $\delta>0$.
Then all  mild solutions of~\eqref{eq:sys} satisfy $\|x(t)\|\to 0$ as $t\to\infty$, and all classical solutions of~\eqref{eq:sys} satisfy $\|x(t)\|= O(t^{-1/\alpha})$ as $t\to\infty$.
\end{cor}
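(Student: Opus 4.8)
The plan is simply to feed the hypotheses through Proposition~\ref{prp:obs} and then through the first part of Theorem~\ref{thm:main}; no new ideas are required. First I would note that the assumptions on $A$ — namely that $A$ is skew-adjoint and that $\sigma(A)$ consists of simple, uniformly separated eigenvalues — together with the hypotheses $i\RR\subseteq\rho(A_B)$ and $\|(is-A_B)\inv\|\lesssim 1+|s|^\alpha$ for all $s\in\RR$ are precisely what is required in Proposition~\ref{prp:obs}. Applying that proposition therefore produces constants $\tau,c_\tau>0$ such that the observability estimate~\eqref{eq:ObsEstimate} holds with $\beta=\alpha/2$.

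Next I would verify that the remaining hypotheses of Theorem~\ref{thm:main} are in place: by assumption $\phi$ is monotone, locally Lipschitz continuous and satisfies $\phi(0)=0$, and~\eqref{eq:PhiAssStab} holds for the given $\delta>0$. Hence the first part of Theorem~\ref{thm:main} applies with this value of $\beta$, yielding simultaneously that all generalised solutions of~\eqref{eq:sys} satisfy $\|x(t)\|\to0$ as $t\to\infty$ and that all classical solutions satisfy $\|x(t)\|=O(t^{-1/(2\beta)})$ as $t\to\infty$. Since $2\beta=\alpha$, the latter rate is exactly $O(t^{-1/\alpha})$, which is the desired conclusion.

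I do not expect any genuine obstacle here, as the statement is a direct composition of the two preceding results; the only point worth flagging is the bookkeeping identity $1/(2\beta)=1/\alpha$ coming from $\beta=\alpha/2$. It is also worth remarking that the refined, second part of Theorem~\ref{thm:main} is not needed: the observability exponent furnished by Proposition~\ref{prp:obs} already matches the resolvent exponent, so we sit in the borderline case $\alpha=2\beta$ in which the second part of Theorem~\ref{thm:main} provides nothing new and the conclusion follows from the first part alone.
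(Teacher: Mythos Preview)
Your proposal is correct and matches the paper's approach exactly: the paper states only that the corollary follows by ``combining Proposition~\ref{prp:obs} with Theorem~\ref{thm:main}'', and your argument spells this out, including the observation that $\beta=\alpha/2$ renders the second part of Theorem~\ref{thm:main} unnecessary. The only minor point is that $\phi(0)=0$ is not listed among the hypotheses of the corollary; it is, however, a standing assumption from the introduction, so your invocation of it is legitimate.
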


\section{Applications to non-linearly damped evolution equations}\label{sec:app}
\label{sec:PDEs}

In this section we illustrate the strength of our main result in two concrete applications, namely a one-dimensional wave equation with weak non-linear damping and an Euler--Bernoulli beam  with a tip mass subject to non-linear damping. In both cases the eigenvalues are simple and uniformly separated, so we may apply Corollary~\ref{cor:gap}. We emphasise, however, that our main result, Theorem~\ref{thm:main}, is applicable much more generally provided one is able to obtain a non-uniform observability estimate as in~\eqref{eq:ObsEstimate}. For an interesting potential application in a setting where there is no uniform spectral gap we refer the reader to the system studied in~\cite{SuTuc20}, which models the dynamics of small-amplitude water waves.

\subsection{The wave equation with weak non-linear damping} Consider the wave equation on the unit interval subject to weak non-linear damping, namely
$$u_{tt}(x,t)=u_{xx}(x,t)+b(x)\phi\left(\int_0^1b(s)u_t(s,t)\,\dd s\right),\qquad x\in(0,1),\ t>0,$$
to be solved subject to the boundary conditions $u(0,t)=u(1,t)=0$ for all $t>0$ and the initial conditions  $u(\cdot,0)\in H_0^1(0,1)$, $u_t(\cdot,0)\in L^2(0,1)$. Here the function $b\in L^2(0,1;\RR)$ models the presence of weak (distributed) damping and the function $\phi\colon\CC\to\CC$ is potentially non-linear. We may formulate the problem in the form of~\eqref{eq:sys} for the state variable $x(t)=(u(\cdot,t),u_t(\cdot,t))$, $t\ge0$,  by setting $X=H_0^1(0,1)\times L^2(0,1)$, $U=\CC$, choosing $A$ to be the operator defined by $A(u,v)=(v,u'')$ for all $(u,v)$ in the domain $D(A)=(H^2(0,1)\cap H_0^1(0,1))\times H_0^1(0,1)$ and defining $B\in\B(U,X)$ by $Bz=(0,b(\cdot)z)$ for all $z\in\CC$. We denote the (rescaled) Fourier sine series coefficients of $b$ by
$$b_n=\int_0^1 b(x)\sin(n \pi x)\,\dd x,\qquad n\ge1.$$

\begin{prp}\label{prp:weak}
Consider the system~\eqref{eq:sys} for the weakly damped wave equation as above, and suppose there exists $\beta>0$ such that $|b_n|\gtrsim n^{-\beta}$ for all $n\ge1$. Suppose further that $\phi\colon U\to U$ is monotone and locally Lipschitz continuous, and that~\eqref{eq:PhiAssStab} holds for some $\delta>0$. Then all  mild solutions of~\eqref{eq:sys} satisfy $\|x(t)\|\to 0$ as $t\to\infty$, and all classical solutions satisfy $\|x(t)\|= O(t^{-1/(2\beta)})$ as $t\to\infty$.
\end{prp}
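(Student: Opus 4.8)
The plan is to verify the observability estimate~\eqref{eq:ObsEstimate} for the operators $A$ and $B$ introduced above and then to invoke the first part of Theorem~\ref{thm:main}, whose hypotheses on $\phi$ coincide with those assumed here. First I would record the spectral data of $A$: equipped with the stated domain, $A$ is skew-adjoint on $X$ and has compact resolvent, with $\sigma(A)=\{ik\pi:k\in\ZZ\setminus\{0\}\}$; each eigenvalue $ik\pi$ is simple, and (up to normalisation) the vectors
\[
e_k=\Bigl(\tfrac{1}{|k|\pi}\sin(|k|\pi x),\ i\,\sgn(k)\sin(|k|\pi x)\Bigr),\qquad k\in\ZZ\setminus\{0\},
\]
form an orthonormal basis of $X$ consisting of eigenvectors. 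Since consecutive eigenvalues differ by at least $\pi$, the spectrum of $A$ is simple and uniformly separated, so $A$ is of the type considered in Proposition~\ref{prp:obs}; I would, however, establish~\eqref{eq:ObsEstimate} directly rather than going through a resolvent estimate for $A_B$.

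Next I would compute that $B^\ast(u,v)=\int_0^1 b(s)v(s)\,\dd s$ for $(u,v)\in X$, using that $b$ is real-valued, so that $B^\ast e_k=i\,\sgn(k)\,b_{|k|}$ and hence $\|B^\ast e_k\|=|b_{|k|}|\gtrsim|k|^{-\beta}$ by hypothesis; equivalently $(1+k^2\pi^2)^{\beta}\|B^\ast e_k\|^2\gtrsim1$ uniformly in $k$. This is precisely the wavepacket bound~\eqref{eq:WavepacketSimple} with the given exponent $\beta$, here obtained from the explicit form of $e_k$ rather than from~\cite[Prop.~5.1]{ChiPau23}, and from this point the argument would follow the proof of Proposition~\ref{prp:obs} essentially verbatim. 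Fixing $\tau>2$ and expanding $B^\ast T(t)x_0=\sum_k e^{ik\pi t}\langle x_0,e_k\rangle B^\ast e_k$, Ingham's inequality~\cite[Prop.~1.5.2]{AmmNic15book}---applicable because the minimal gap $\pi$ exceeds $2\pi/\tau$---gives
\[
\int_0^\tau\|B^\ast T(t)x_0\|^2\,\dd t\;\gtrsim\;\sum_k|\langle x_0,e_k\rangle|^2\,|b_{|k|}|^2\;\gtrsim\;\sum_k|\langle (I-A)^{-\beta}x_0,e_k\rangle|^2\;=\;\|(I-A)^{-\beta}x_0\|^2
\]
for all $x_0\in X$, where the second inequality combines the identity $|\langle x_0,e_k\rangle|^2=(1+k^2\pi^2)^{\beta}\,|\langle (I-A)^{-\beta}x_0,e_k\rangle|^2$ with the uniform lower bound $(1+k^2\pi^2)^{\beta}|b_{|k|}|^2\gtrsim1$. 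This establishes~\eqref{eq:ObsEstimate} with the given $\beta$, a suitable $\tau>2$ and some $c_\tau>0$.

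It then remains only to apply Theorem~\ref{thm:main}: $\phi$ is monotone and locally Lipschitz continuous with $\phi(0)=0$ and satisfies~\eqref{eq:PhiAssStab} by hypothesis, and~\eqref{eq:ObsEstimate} has just been verified, so the theorem yields $i\RR\subseteq\rho(A_B)$, $\|x(t)\|\to0$ for every generalised solution of~\eqref{eq:sys}, and $\|x(t)\|=O(t^{-1/(2\beta)})$ for every classical solution, as claimed. No step is genuinely hard here; the parts needing most care are the identification of the spectral data of $A$---in particular ensuring that both families $\{ik\pi\}_{k\ge1}$ and $\{-ik\pi\}_{k\ge1}$ are correctly incorporated into Ingham's inequality and that the relevant gap is positive---and the normalisation bookkeeping linking $\|B^\ast e_k\|$, $b_{|k|}$ and the weight $(1+k^2\pi^2)^{\beta}$ arising from $(I-A)^{-\beta}$, both of which mirror the proof of Proposition~\ref{prp:obs}. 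As an alternative one could instead derive the resolvent bound $\|(is-A_B)\inv\|\lesssim1+|s|^{2\beta}$ by analysing the associated non-local Helmholtz-type two-point boundary value problem and then appeal directly to Corollary~\ref{cor:gap}, but the route through~\eqref{eq:ObsEstimate} is more economical.
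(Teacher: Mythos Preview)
Your proof is correct. It differs from the paper's argument in that the paper first records the resolvent bound $\|(is-A_B)^{-1}\|\lesssim 1+|s|^{2\beta}$ (by appealing to \cite[Cor.~6.3]{ChiPau23}) and then invokes Corollary~\ref{cor:gap}, whereas you bypass the resolvent estimate entirely: you read off the wavepacket lower bound $\|B^\ast e_k\|\gtrsim|k|^{-\beta}$ directly from the explicit eigenbasis and then run the Ingham argument of Proposition~\ref{prp:obs} to obtain~\eqref{eq:ObsEstimate}, after which Theorem~\ref{thm:main} applies. Your route is more self-contained---it needs no external input on the resolvent of $A_B$ and yields $i\RR\subseteq\rho(A_B)$ as a \emph{conclusion} of Theorem~\ref{thm:main} rather than as a hypothesis to be checked separately---while the paper's route is terser and showcases how Corollary~\ref{cor:gap} packages the entire argument once a resolvent bound is available. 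Both approaches arrive at the same rate $t^{-1/(2\beta)}$, and the alternative you sketch at the end (deriving the resolvent bound and applying Corollary~\ref{cor:gap}) is precisely what the paper does.
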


\begin{proof}
The operator $A$ is skew-adjoint, and its spectrum consists of the simple eigenvalues $i\pi n$ for $n\in\ZZ\setminus\{0\}$, which are uniformly separated. As in the proof of~\cite[Cor.~6.3]{ChiPau23}, our assumption on the decay of the Fourier sine series coefficients of $b$ implies that $i\RR\subseteq\rho(A_B)$ and $\|(is-A_B)\inv\|\lesssim 1+|s|^{2\beta}$ for all $s\in\RR$. The result now follows from Corollary~\ref{cor:gap}.
\end{proof}

\begin{rems}
\begin{enumerate}[(a)]
\item Note that in the particular case where $\phi$ is the identity map on $\CC$, so that~\eqref{eq:PhiAssStab} holds for $\delta=1$,  Proposition~\ref{prp:weak} is sharp in the sense that if $\limsup_{n\to\infty}n^\beta|b_n|>0$ then for any function $r\colon\RR_+\to(0,\infty)$ such that $r(t)=o(t^{-1/(2\beta)})$ as $t\to\infty$ there exists a classical solution such that $\|x(t)\|\ne O(r(t))$ as $t\to\infty$. Indeed, if this is not the case then an application of the uniform boundedness principle gives $\|T_B(t)\smash{A_B^{-1}}\|=O(r(t))$ as $t\to\infty$. On the other hand, since $\limsup_{n\to\infty}n^\beta|b_n|>0$ it follows from~\cite[Prop.~5.1]{ChiPau23} that 
$$\limsup_{|s|\to\infty}\frac{\|(is-A_B)^{-1}\|}{|s|^{2\beta}}>0.$$
 Now~\cite[Prop.~5.3]{ChiPau23} implies that $\limsup_{t\to\infty}t^{1/(2\beta)}\|T_B(t)\smash{A_B\inv}\|>0$, yielding the required contradiction.
\item Note that since $b\in L^2(0,1;\RR)$ we must have $(b_n)_{n\ge1}\in\ell^2$, so necessarily $\beta>1/2$ in Proposition~\ref{prp:weak}. On the other hand, every $\beta>1/2$ can be achieved, for instance by considering the function $b$ whose Fourier sine series coefficients are $b_n=n^{-\beta}$ for $n\ge1$. We refer the interested reader to~\cite[Rem.~6.4]{ChiPau23} for a discussion on the possibility of achieving these decay rates by means of functions $b$ that possess additional regularity.
\end{enumerate}
\end{rems}

\subsection{The SCOLE model with non-linear damping}

In this section we analyse the stability of the SCOLE model~\cite{GuoIva05, TayBal88},
 which consists of an Euler--Bernoulli beam equation coupled with an ODE modelling the dynamics of a \emph{tip mass}.
The system has the form
$$
\left\{
\begin{aligned}
\rho(x)u_{tt}(x,t)&=-(E\!I(x)u_{xx}(x,t))_{xx}, &x\in(0,1),\ t>0,\\
mu_{tt}(1,t)-(E\!Iu_{xx})_x(1,t)&=-\phi_1(u_t(1,t),u_{xt}(1,t)), &t>0,\\
Ju_{xtt}(1,t)+E\!I(1)u_{xx}(1,t)&=-\phi_2(u_t(1,t),u_{xt}(1,t)), &t>0,
\end{aligned}
\right.$$
to be solved subject to the boundary conditions $u(0,t)=u_x(1,t)=0$ for all $t>0$ and the initial conditions  $u(\cdot,0)\in H_L^2(0,1)$, $u_t(\cdot,0)\in L^2(0,1)$, where $H_L^2(0,1)=\{u\in H^2(0,1) : u(0)=u'(0)=0\}$. In this model, $w(x,t)$ is the deflection of the beam at $x\in [0,1]$ and  time $t\ge 0$,  $E\!I\in C^4([0,1])$ and $\rho\in C^4([0,1])$ are the (uniformly positive) flexural rigidity and mass density of the beam, respectively, and $m,J>0$ are, respectively, the mass and moment of inertia of the tip mass. Finally, the two functions $\phi_1,\phi_2\colon \CC^2\to \CC$ describe the non-linear effects of the boundary condition at $x=1$. We assume that there exist $a,b>0$ and
$\zeta\in C^2([0,1])$ such that $\zeta(0)=0$ and
$$
\begin{aligned}
2(1-a)\rho(x)-(\rho\zeta)'(x)<-b,\\
E\!I(x)(1-a-2\zeta'(x))+\frac{1}{2}(E\!I\,\zeta)'(x)<-b
\end{aligned}$$
for all $x\in [0,1]$.
Note that these conditions are in particular satisfied if $E\!I$ and $\rho$ are constant functions, in which case we may take $\zeta(x) = 2x$ for $0\le x\le1$.

We  may formulate the problem in the form of~\eqref{eq:sys} for the state variable $x(t)=(u(\cdot,t),u_t(\cdot,t),u_t(1,t),u_{xt}(1,t))$, $t\ge0$,  by setting $X=H_L^2(0,1)\times L^2(0,1)\times\CC^2$,
 $U=\CC^2$, defining the operator $A$ by $$A(u,v,\lambda,\mu)=\big(v,-\rho\inv (E\!Iu'')'',m\inv (E\!Iu'')'(1), -J\inv E\!I(1)u''(1)\big)$$ for all $(u,v,\lambda,\mu)$ in the domain 
$$D(A) = \big\{(u,v,\lambda,\mu)\in H^4(0,1)\times H_L^2(0,1)\times \CC^2 : v(1)=\lambda, \ v'(1)= \mu \big\},$$ the operator $B\in \B(U,X)$ by $B(\lambda,\mu)=(0,0,\lambda,\mu)$ for all $(\lambda,\mu)\in U$, and the map $\phi\colon U\to U$ by $\phi(\lambda,\mu)=(m\inv\phi_1(\lambda,\mu),J\inv\phi_2(\lambda,\mu))$ for all $(\lambda,\mu)\in U$. Note that $\phi$ is locally Lipschitz continuous if and only if both $\phi_1$ and $\phi_2$ are locally Lipschitz continuous. The function $\phi$ is monotone for instance if $\phi_1$ is independent of the second variable, $\phi_2$ is independent of the first variable and both of the maps $\lambda\mapsto\phi_1(\lambda,0)$ and $ \mu\mapsto\phi_2(0,\mu)$ are monotone. We obtain the following polynomial stability result.

\begin{prp}\label{prp:SCOLE}
Consider the system~\eqref{eq:sys} for the SCOLE model with non-linear damping as above. 
Suppose  that $\phi \colon U\to U$ is monotone and locally Lipschitz continuous, and that~\eqref{eq:PhiAssStab} holds for some $\delta>0$.
 Then all  mild solutions of~\eqref{eq:sys} satisfy $\|x(t)\|\to 0$ as $t\to\infty$, and all classical solutions satisfy $\|x(t)\|= O(t^{-1/2})$ as $t\to\infty$.
\end{prp}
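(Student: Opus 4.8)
The plan is to deduce the proposition from Corollary~\ref{cor:gap}, applied with $\alpha=2$. The assumptions on $\phi$ are precisely those required there, so the task reduces to showing that, for a suitable choice of inner product on $X$, the operator $A$ is skew-adjoint with simple and uniformly separated spectrum, and that $i\RR\subseteq\rho(A_B)$ together with the resolvent bound $\|(is-A_B)\inv\|\lesssim 1+|s|^2$ for all $s\in\RR$.

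First I would endow $X=H_L^2(0,1)\times L^2(0,1)\times\CC^2$ with the energy inner product, under which $\|(u,v,\lambda,\mu)\|^2=\int_0^1\big(E\!I\,|u''|^2+\rho\,|v|^2\big)\,\dd x+m|\lambda|^2+J|\mu|^2$; this norm is equivalent to the product norm because $E\!I$ and $\rho$ are uniformly positive and bounded. An integration by parts, using $u(0)=u'(0)=0$, the membership $v\in H_L^2(0,1)$ and the compatibility relations $v(1)=\lambda$, $v'(1)=\mu$ built into $D(A)$, shows $\R\langle Az,z\rangle=0$ for all $z\in D(A)$; combined with bounded invertibility of $A$ (by unique solvability of the associated static boundary value problem) this gives $A^*=-A$, so $\T$ is a unitary group, reflecting energy conservation in the undamped SCOLE model. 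Since $D(A)$ embeds compactly into $X$ by the Rellich--Kondrachov theorem, $\sigma(A)$ consists of isolated eigenvalues $is_n$ with $|s_n|\to\infty$. Writing an eigenfunction as a solution of $(E\!Iu'')''=s^2\rho u$ satisfying the four boundary conditions and performing the classical asymptotic analysis of the resulting characteristic equation, as in the treatments of the SCOLE model in~\cite{GuoIva05,TayBal88}, one obtains that the eigenvalues are simple for $|n|$ large with $\inf_{m\ne n}|s_m-s_n|>0$; the finitely many remaining eigenvalues being simple as well, the full spectrum consists of simple and uniformly separated eigenvalues.

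The crux is the resolvent estimate, which I would establish by a standard frequency-domain contradiction argument: since $A_B$ has compact resolvent it suffices to rule out (i) purely imaginary eigenvalues of $A_B$ and (ii) sequences $s_n\in\RR$ with $|s_n|\to\infty$ and unit vectors $z_n=(u_n,v_n,\lambda_n,\mu_n)\in D(A_B)$ with $(is_n-A_B)z_n=o(|s_n|^{-2})$. In both cases the dissipation term gives $|\lambda_n|^2+|\mu_n|^2=\R\langle BB^*z_n,z_n\rangle\to0$ at a controlled rate, hence smallness of the tip data $v_n(1)$, $v_n'(1)$ and, via the boundary conditions, of $u_n(1)$, $u_n'(1)$. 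Testing the beam equation for $u_n$ with the multiplier $\zeta u_n'$ — a weighted variant of the classical $xu_x$ multiplier, reducing to $2xu_n'$ in the constant-coefficient case — and integrating by parts, the two sign conditions imposed on $2(1-a)\rho-(\rho\zeta)'$ and on $E\!I(1-a-2\zeta')+\tfrac12(E\!I\zeta)'$ are exactly what forces the resulting interior and boundary terms to combine with a definite sign, giving $\|z_n\|\to0$ and the required contradiction. This yields $i\RR\subseteq\rho(A_B)$ and $\|(is-A_B)\inv\|\lesssim1+|s|^2$, and Corollary~\ref{cor:gap} with $\alpha=2$ now delivers both conclusions.

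I expect the multiplier estimate to be the main obstacle: one has to bookkeep the $s_n$-dependent weights carefully so that the $o(|s_n|^{-2})$ smallness of the residual is just strong enough to absorb all lower-order error terms — including those produced by the tip-mass boundary conditions and the variable coefficients $E\!I$ and $\rho$ — which is what ultimately fixes the exponent $\alpha=2$. By contrast the eigenvalue asymptotics are fairly routine, although establishing uniform separation over the \emph{whole} spectrum rather than only asymptotically still requires a little care.
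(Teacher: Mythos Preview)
Your strategy is correct and matches the paper's: both apply Corollary~\ref{cor:gap} with $\alpha=2$. The paper's proof differs only in that it cites~\cite{GuoIva05} for skew-adjointness, compact resolvent, simplicity and uniform separation of the eigenvalues of $A$, and~\cite{FkiPau25arxiv} together with~\cite[Prop.~1.3]{BatDuy08} for the resolvent bound $\|(is-A_B)^{-1}\|\lesssim 1+s^2$, rather than sketching these arguments as you do; your multiplier outline is essentially the expected content of the latter reference.
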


\begin{proof}
By~\cite[Prop.~1.1]{GuoIva05} the operator $A$ is skew-adjoint, has compact resolvent, and its eigenvalues are simple.
In addition, it follows from~\cite[Prop.~1.2]{GuoIva05} that the eigenvalues of $A$ are uniformly separated.
Finally, $i\RR\subseteq \rho(A_B)$ and $\|(is-A_B)^{-1}\|\lesssim 1+s^2$ for $s\in\RR$  by~\cite[Thm.~3.1]{FkiPau25arxiv} and~\cite[Prop.~1.3]{BatDuy08}, so result follows from Corollary~\ref{cor:gap}.
\end{proof}

\bibliographystyle{plain}

\end{document}